\newtheorem{theorem}{Theorem}
\newtheorem{corollary}{Corollary}
\newtheorem{lemma}{Lemma}
\newtheorem{remark}{Remark}
\newtheorem{definition}{Definition}
\newtheorem{proposition}{Proposition}
\def\qed{\begin{flushright} $\Box$ \end{flushright}}
\def\Dbar{\leavevmode\lower.6ex\hbox to 0pt{\hskip-.23ex \accent"16\hss}D}
\def\bR{{\mbox{\bf R}}}
\def\bZ{{\mbox{\bf Z}}}
\def\bC{{\mbox{\bf C}}}
\def\paf{{\mbox{\rm PAF}}}
\def\psd{{\mbox{\rm PSD}}}
\def\dft{{\mbox{\rm DFT}}}
\begin{document}

{\bf\LARGE
\begin{center}
Compression of Periodic Complementary Sequences and Applications\footnote{This research is
supported by NSERC grants.}
\end{center}
}

{\Large
\begin{center}
Dragomir {\v{Z}}. {\Dbar}okovi{\'c}\footnote{University of Waterloo,
Department of Pure Mathematics, Waterloo, Ontario, N2L 3G1, Canada
e-mail: \url{djokovic@math.uwaterloo.ca}}, Ilias S.
Kotsireas\footnote{Wilfrid Laurier University, Department of Physics
\& Computer Science, Waterloo, Ontario, N2L 3C5, Canada, e-mail:
\url{ikotsire@wlu.ca}}
\end{center}
}

\begin{abstract}
\noindent A collection of complex sequences of length $v$ is complementary if the sum of their periodic autocorrelation function values at all non-zero shifts is constant. For a complex sequence $A = [a_0,a_1,\ldots,a_{v-1}]$ of length $v = dm$ we define the $m$-compressed sequence $A^{(d)}$ of length $d$ whose terms are the sums $a_i + a_{i+d} + \cdots + a_{i+(m-1)d}$. We prove that the $m$-compression of a complementary collection of sequences is also complementary. The compression procedure can 
be used to simplify the construction of complementary 
$\{\pm1\}$-sequences of composite length. In particular, we construct several supplementary difference sets (SDS) $(v;r,s;\lambda)$ with $v$ even and $\lambda = (r+s)-v/2$, given here for the first time. 
There are $15$ normalized parameter sets $(v;r,s;\lambda)$ with $v\le50$ for which the existence question was open. We resolve 
all but one of these cases.
\end{abstract}

\section{Introduction}

We consider the ring $\bZ_v = \{ 0, 1, \ldots, v-1\}$ of integers modulo a positive integer $v$. Let $k_1,\ldots,k_t$ be positive integers and $\lambda$ an integer such that
\begin{equation} \label{par-lambda}
\lambda (v - 1) = \sum_{i=1}^t k_i (k_i - 1),
\end{equation} 
and let $X_1,\ldots,X_t$ be subsets of $\bZ_v$ such that 
\begin{equation} \label{kard-ki}
|X_i|=k_i, \quad i\in\{1,\ldots,t\}.
\end{equation} 

\begin{definition}
We say that $X_1,\ldots,X_t$ are {\em supplementary difference 
sets} (SDS) with parameters $(v;k_1,\ldots,k_t;\lambda)$, if for every nonzero element $c\in\bZ_v$ there are exactly $\lambda$ 
ordered pairs $(a,b)$ such that $a-b=c \pmod{v}$ and 
$\{a,b\}\subseteq X_i$ for some $i\in\{1,2,\ldots,t\}$.
\end{definition}

These SDS are defined over the cyclic group $\bZ_v$. More generally SDS can be defined over any finite abelian group, 
and there are also further generalizations where the group
may be any finite group. However, in this paper we shall 
consider only the cyclic case.

In the context of an SDS, say $X_1,\ldots,X_t$, with parameters $(v;k_1,\ldots,k_t;\lambda)$, we refer to the subsets $X_i$ as the {\em base blocks} and we introduce an additional parameter, $n$, defined by: 
\begin{equation} \label{par-n}
n = k_1 + \cdots + k_t - \lambda. 
\end{equation} 

The SDS formalism is a generalization of various well-known and studied designs. 

SDS consisting of just one base block, the case $t=1$, are called cyclic difference sets and are denoted by 
$(v;k;\lambda)$. The reader is referred to the classic book \cite{Baumert:1971}, the more recent book \cite{Stinson:book:2004} and the recent survey \cite{Handbook:DifferenceSets}. The list of $(v;k;\lambda)$ cyclic difference sets with $v \leq 50$ in a normal form can be found in section $3$ of \cite{Djokovic:AnnComb:2011}.

The SDS with two base blocks, the case $t=2$, covers several 
interesting designs. First, the circulant D-optimal designs 
\cite{KO:2007,DK:JCD:2012} corresponding to the case where $v$ is odd and $n=(v-1)/2$. Second, the binary complementary pairs (periodic analogs of complementary Golay pairs) \cite{Yang:1971} 
correspond to the case $v=2n$. 
Third, when the two base blocks have the same size the SDS 
can be used to construct some BIBD (balanced incomplete 
block designs) \cite{MR:HCD2007,Stinson:book:2004}.  
We shall denote the SDS with two base blocks of size $r$ and 
$s$ by $(v;r,s;\lambda)$. Upon imposing the normalization condition $\frac{v}{2} \geq r \geq s \geq 2$, it can be seen that for $v \leq 50$ there are $227$ feasible parameter sets, see \cite{Djokovic:AnnComb:2011}. These parameter sets are 
those that satisfy the conditions (\ref{par-lambda}) and
(\ref{kard-ki}) with $k_1=r$, $k_2=s$. There is a non-existence result for SDS $(v;r,s;\lambda)$ obtained in \cite{Arasu:Xiang:DCC:1992}, which can be used to eliminate some 
of these 227 feasible parameter sets. In \cite{MDV:JCD:2004} the authors construct $30$ new $(v;r,s;\lambda)$ and show that an additional $19$ $(v;r,s;\lambda)$ do not exist. In \cite{Djokovic:AnnComb:2011} the author constructs $8$ new $(v;r,s;\lambda)$. We list here the remaining $15$ undecided cases from \cite{Djokovic:AnnComb:2011}. They all have $v$ in the interval $40 < v \leq 50$.

\begin{verbatim}
(41;15,6;6)     n=15    (43;9,4;2)      n=11    (44;19,2;8)     n=13
(45;18,2;7)     n=13    (46;21,6;10)    n=17    (47;9,5;2)      n=12
(47;12,3;3)     n=12    (47;14,2;4)     n=12    (47;15,5;5)     n=15
(48;14,3;4)     n=13    (49;10,3;2)     n=11    (49;21,4;9)     n=16
(50;8,7;2)      n=13    (50;20,4;8)     n=16    (50;22,21;18)   n=25
\end{verbatim}

In this paper (see Theorem \ref{thm:v<=50}) we decide the existence of all of them except for the case $(49;21,4;9)$. 
The main tools that we use for that purpose are computational techniques based on the Power Spectral Density ({\psd}) test as well as the method of SDS compression. For the PSD test see section \ref{sec:Complementarity}. The compression method is studied systematically and in full generality (for arbitrary complex sequences and in particular for SDS) in section \ref{sec:Compression}. We note that techniques that essentially amount to compression have been used previously \cite{Cohn:1992,KKNK:1994} in the context of D-optimal designs.

Apart from the cases where there exist Golay complementary 
pairs of length $v$, the binary complementary pairs are known 
to exist only for $v=34$ \cite{Djokovic:DesCodes:1998}
and $v=50$ \cite{Djokovic:AnnComb:2011,KK:2008}. In this paper we 
construct such pairs of length $v=50$ and $v=58$ (see the section \ref{sec:results}). The example for $v=50$ has 
different parameters, namely $(50;22,21;18)$, from the previously 
constructed examples with parameters $(50;25,20;20)$.
The first unknown case is now for $v=68$.

\section{The group ring approach to SDS}

In the study of SDS it is often convenient to use the group ring of the additive group $\bZ_v$ with coefficients in $\bC$, the field of complex numbers. This 
group ring can be identified with the quotient ring 
$\bC[x]/(x^v-1)$, where $\bC[x]$ is the polynomial ring over 
$\bC$ in a single indeterminate $x$ and $(x^v-1)$ is the ideal 
generated by the polynomial $x^v-1$. By abuse of notation, we 
shall consider $x$ also as an element of this group ring, in 
which case we have $x^v=1$. 
Then $\bZ_v$ is embedded in this group ring via
the map which sends $i\to x^i$, $i\in\bZ_v$.
Note that because $x^v=1$ there is a ring homomorphism
(evaluation at 1)
$\bC[x]/(x^v-1)\to\bC$ which sends $x\to1$.

To any subset $X\subseteq\bZ_v$ we shall associate an element
of the group ring, namely $X(x):=\sum_{i\in X} x^i$.
By abuse of notation, we shall denote this element also by $X$, except that in the case $X=\bZ_v$ we set
$$ T=T(x)=1+x+\cdots+x^{v-1}. $$
It will be clear from the context which meaning of $X$ is used.

The group ring has an involution which sends each complex number $c$ to its complex conjugate $\overline{c}$ and sends $x$ to its
inverse $x^{-1}=x^{v-1}$. We shall denote this involution
by an asterisk, e.g., we have $x^*=x^{-1}$ and $T^*=T$.
For any element $X$ of the group ring we define its
{\em norm} to be the element $N(X)=XX^*$.
We also define the function $N_X:\bZ_v\to\bZ$ by declaring that
$N_X(s)$ is the coefficient of $x^s$ in $N(X)$, i.e., we have
\begin{equation*} 
N(X)=\sum_{i=0}^{v-1} N_X(i)x^i.
\end{equation*}

To any complex sequence of length $v$, say
$A=[a_0,a_1,\ldots,a_{v-1}]$ we assign the element
$A(x)=\sum_i a_i x^i$ of the group ring. Obviously,
the map sending $A$ to $A(x)$ is injective. 
To simplify notation, we shall write $A$ instead of $A(x)$ 
if no confusion will arise.
By evaluating $A(x)$ at 1, we obtain $A(1)=\sum_i a_i$.
Similarly, we have $N(A)(1)=|A(1)|^2$.

We point out that the SDS property is equivalent to an identity in the group ring. This is stated in the next lemma whose 
proof is straightforward.
\begin{lemma}
Let $X_1,\ldots,X_t$ be subsets of $\bZ_v$ and assume that Eqs. (\ref{par-lambda}) and (\ref{kard-ki}) hold. Then these subsets are the base blocks of an SDS with parameters 
$(v;k_1,\ldots,k_t;\lambda)$ if and only if 
\begin{equation}\sum_{i=1}^t N(X_i)=n+\lambda T,
\label{jed:zbir-nor}
\end{equation}
where $n$ is defined by Eq. (\ref{par-n}).
\end{lemma}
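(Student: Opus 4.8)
The plan is to prove both directions simultaneously by computing the group-ring element $\sum_{i=1}^t N(X_i)$ coefficient by coefficient and matching it against $n+\lambda T$. First I would expand a single norm: writing $X_i=\sum_{a\in X_i}x^a$ and $X_i^*=\sum_{b\in X_i}x^{-b}$, one gets $N(X_i)=X_iX_i^*=\sum_{a,b\in X_i}x^{a-b}$, so by definition the coefficient $N_{X_i}(s)$ equals the number of ordered pairs $(a,b)\in X_i\times X_i$ with $a-b\equiv s\pmod v$. Summing over $i$, the coefficient of $x^s$ in $\sum_{i=1}^t N(X_i)$ is $\sum_{i=1}^t N_{X_i}(s)$, which is precisely the total number of ordered pairs $(a,b)$ with $a-b\equiv s$ and $\{a,b\}\subseteq X_i$ for some $i$, counted over all base blocks.

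Next I would read off the two sides coefficient by coefficient. The right-hand side $n+\lambda T=n+\lambda(1+x+\cdots+x^{v-1})$ has constant coefficient $n+\lambda$ and coefficient $\lambda$ at every $x^s$ with $s\neq0$. On the left, the constant term ($s=0$) counts the pairs with $a=b$, of which there are exactly $|X_i|=k_i$ in block $i$ by (\ref{kard-ki}); hence the constant coefficient of $\sum_i N(X_i)$ is $\sum_i k_i$. Since $n$ is defined by (\ref{par-n}) as $\sum_i k_i-\lambda$, we have $\sum_i k_i=n+\lambda$, so the constant terms of the two sides agree automatically, independently of any SDS hypothesis. This is exactly the bookkeeping point that forces the additive constant $n$, rather than $\sum_i k_i$, to appear on the right-hand side.

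It then remains only to match the nonzero coefficients. Identity (\ref{jed:zbir-nor}) holds if and only if $\sum_{i=1}^t N_{X_i}(s)=\lambda$ for every $s\neq0$, and this is precisely the defining condition for $X_1,\ldots,X_t$ to be an SDS with parameters $(v;k_1,\ldots,k_t;\lambda)$, which establishes the equivalence. As a consistency check I would evaluate (\ref{jed:zbir-nor}) at $x=1$ to get $\sum_i k_i^2=n+\lambda v$, which rearranges to $\sum_i k_i(k_i-1)=\lambda(v-1)$, recovering (\ref{par-lambda}); this confirms that the standing hypothesis is compatible with, and in fact forced by, the identity. There is no real obstacle here, as the paper's remark that the proof is straightforward already indicates; the only step requiring genuine care is the reconciliation of the $s=0$ term with the definition of $n$.
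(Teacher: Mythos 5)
Your proof is correct: the coefficient-by-coefficient comparison, the identification of $N_{X_i}(s)$ with the number of ordered pairs in $X_i$ of difference $s$, and the reconciliation of the $s=0$ term via $\sum_i k_i = n+\lambda$ constitute exactly the ``straightforward'' verification the paper alludes to but does not write out. The evaluation at $x=1$ recovering Eq.~(\ref{par-lambda}) is a worthwhile extra consistency check, but nothing in your argument departs from the intended route.
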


\section{Power spectral density}

Many known SDS have been constructed by a computer search. 
In such searches it is often convenient to replace a subset 
$X\subseteq\bZ_v$ with the $\{\pm1\}$-sequence 
$A=[a_{0},a_{1},\ldots,a_{v-1}]$ where $a_i=-1$ if and only if 
$i\in X$. 
We shall refer to $A$ as the {\em associated sequence} of $X$.
In that case we have 
\begin{equation} \label{jed:A-X}
A(x)=T(x)-2X(x). 
\end{equation}

Let $A = [a_0,a_1,\ldots,a_{v-1}]$ be an arbitrary complex 
sequences of length $v$, and let 
$\omega = e^{2\pi i/v}$ be a primitive $v$-th root of unity. 

The {\em Discrete Fourier Transform} ({\dft}) of the sequence $A$ is the function $\bZ_v\to\bC$ defined by the formula
$$ \dft_A(s)=\sum_{j=0}^{v-1} a_j\omega^{js}. $$

We often identify a function on $\bZ_v$ with the sequence of its values, e.g., we write
$$
{\dft}_A = B = [b_0,b_1,\ldots,b_{v-1}], \text{ where } 
b_j =\dft_A(j).
$$
This convention is used throughout the paper.

The {\em Power Spectral Density} ({\psd}) of the same sequence 
$A$ is the function $\bZ_v\to\bR$ defined by the formula
$$ \psd_A(s)=\left|\dft_A(s)\right|^2. $$

The {\em Periodic Autocorrelation Function} ({\paf}) of $A$ is defined as
$$ {\paf}_A(s) = \sum_{j=0}^{v-1} a_{j+s} \bar{a}_j. $$
We shall refer to the argument $s$ as the {\em shift} variable.

It is straightforward to verify that for any complex sequence 
$A=[a_0,a_1,\ldots,a_{v-1}]$ and the corresponding element 
$A=A(x)=a_0+a_1x+\cdots+a_{v-1}x^{v-1}$ of the group ring we have 
\begin{equation} \label{jed:N=PAF}
N_A=\paf_A. 
\end{equation}
In general we have $N_A(-s)=\overline{N_A(s)}$, and for a 
real sequence $A$ we have $N_A(-s)=N_A(s)$. 
Another important fact is the following classical theorem 
(see e.g. \cite{Ricker:book:2003}).

\begin{theorem}[Wiener--Khinchin] 
For any complex sequence $A=[a_0,a_1,\ldots,a_{v-1}]$, or 
equivalently any element $A=a_0+a_1x+\cdots+a_{v-1}x^{v-1}$ 
of the group ring, we have 
\begin{equation*} 
    {\psd}_A = {\dft}({\paf}_A).
\end{equation*}
\end{theorem}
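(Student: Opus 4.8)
The plan is to prove the identity pointwise, i.e.\ to show that $\psd_A(s)=\dft(\paf_A)(s)$ for every $s\in\bZ_v$, by expanding both sides as explicit sums over $\bZ_v$ and matching them after a single change of summation variable.

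First I would expand the left-hand side. By definition $\psd_A(s)=\dft_A(s)\,\overline{\dft_A(s)}$, so using $\overline{\omega}=\omega^{-1}$ and writing the two factors with independent summation indices $j$ and $k$, one obtains
\begin{equation*}
\psd_A(s)=\left(\sum_{j=0}^{v-1}a_j\omega^{js}\right)\left(\sum_{k=0}^{v-1}\bar a_k\omega^{-ks}\right)=\sum_{j=0}^{v-1}\sum_{k=0}^{v-1}a_j\bar a_k\,\omega^{(j-k)s}.
\end{equation*}
Next I would reindex the double sum by the shift $m=j-k$ taken modulo $v$. For each fixed $k$ the map $j\mapsto m$ is a bijection of $\bZ_v$, and since $\omega^v=1$ the factor $\omega^{(j-k)s}=\omega^{ms}$ is well defined with $a_j=a_{k+m}$ read modulo $v$. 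Grouping the terms according to $m$ gives
\begin{equation*}
\psd_A(s)=\sum_{m=0}^{v-1}\omega^{ms}\left(\sum_{k=0}^{v-1}a_{k+m}\bar a_k\right)=\sum_{m=0}^{v-1}\paf_A(m)\,\omega^{ms},
\end{equation*}
where the inner sum is exactly $\paf_A(m)$ by definition. The final expression is precisely $\dft(\paf_A)(s)$, which completes the argument.

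The computation is entirely routine; the only point requiring care is the reindexing step, where one must verify that all indices on the sequence $A$ and all exponents of $\omega$ are interpreted consistently modulo $v$ (this is exactly where the periodicity underlying the group ring $\bC[x]/(x^v-1)$ enters). An alternative, essentially equivalent, route is to observe that the DFT is a ring isomorphism from that group ring, equipped with its convolution product, onto $\bC^v$ with pointwise multiplication; it sends $A^*$ to $\overline{\dft_A}$, hence carries $N(A)=AA^*$ to $\dft_A\cdot\overline{\dft_A}=\psd_A$, and combining this with the identity $N_A=\paf_A$ from Eq.~(\ref{jed:N=PAF}) yields the theorem at once. I would expect the main (and very mild) obstacle to be nothing more than this modular index bookkeeping.
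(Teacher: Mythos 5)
Your proof is correct. Note that the paper itself does not prove this theorem at all: it is stated as a classical fact with a citation to Ricker's book, so there is no internal proof to compare against --- your argument fills that gap. The direct computation you give (expand $\psd_A(s)=\dft_A(s)\overline{\dft_A(s)}$ as a double sum, substitute $m=j-k \pmod v$, and recognize the inner sum as $\paf_A(m)$) is the standard proof and is carried out correctly; the only delicate point, reading all indices and exponents modulo $v$, is exactly the one you flag. Your alternative route is also sound and is arguably the one most in the spirit of the paper: evaluation at the powers $\omega^s$ is a ring homomorphism $\bC[x]/(x^v-1)\to\bC^v$ sending $A^*$ to $\overline{\dft_A}$, hence $N(A)=AA^*$ to $\psd_A$, and combined with the paper's Eq.~(\ref{jed:N=PAF}), $N_A=\paf_A$, this gives the identity immediately --- indeed this is implicitly how the paper uses the theorem when it rewrites it as Eq.~(\ref{jed:W-K}). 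Either version would serve as a complete proof.
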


By using Eq. (\ref{jed:N=PAF}) this can be rewritten as follows:
\begin{equation} \label{jed:W-K}
    {\psd}_A(s) = \sum_{r=0}^{v-1} N_A(r)\omega^{rs}.
\end{equation}
If the sequence $A$ is real, then also
\begin{equation} \label{jed:W-K-realna}
    {\psd}_A(s) = \sum_{j=0}^{v-1} N_A(j)\cos\frac{2\pi js}{v}.
\end{equation}

\section{Complementary sequences} \label{sec:Complementarity}

The associated sequences of the base blocks of an SDS have 
an important property known as complementarity. Let us 
begin with the general definition of this property.

\begin{definition}
Let $A_1,\ldots,A_t$ be complex sequences of length $v$. We say that these sequences are {\em complementary} if
$$
\sum_{i=1}^t {\paf}_{A_i} = [\alpha_0, \underbrace{\alpha, \ldots, \alpha}_{v-1 ~\mbox{\rm terms}}]
$$
for some $\alpha_0$ and $\alpha$ (the \paf-constants).
\end{definition}

In the next theorem we show that a finite collection of complex sequences has constant sum of PAF values at nonzero shifts if 
and only if the same is true for the PSD values, and we find 
simple formulae expressing the PSD-constants in terms of the 
PAF-constants. (This theorem is a generalization of 
\cite[Theorem 2]{FGS:2001}.)

\begin{theorem}
Let $A_1,\ldots,A_t$ be complex sequences of length $v$. These sequences are complementary, i.e., 
\begin{equation}
    \sum_{i=1}^t {\paf}_{A_i} = [\alpha_0, \underbrace{\alpha,\ldots,\alpha}_{v-1 ~\mbox{\rm terms}}]
    \label{PAF_constants}
\end{equation}
if and only if
\begin{equation}
    \sum_{i=1}^t {\psd}_{A_i} = [\beta_0, \underbrace{\beta,\ldots,\beta}_{v-1 ~\mbox{\rm terms}}].
    \label{PSD_constants}
\end{equation}
The $\paf$-constants $\alpha_0$ and $\alpha$ and the 
$\psd$-constants $\beta_0$ and $\beta$ are related as follows: 
\begin{equation}
\beta_0 = \alpha_0+(v-1)\alpha,\quad \beta=\alpha_0-\alpha.
    \label{PSD_to_PAF_relationships}
\end{equation}
\label{Theorem:PAF_PSD_relationships}
\end{theorem}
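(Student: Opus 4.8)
The plan is to reduce the stated equivalence to a single observation about the discrete Fourier transform, using the Wiener--Khinchin theorem that is already available. First I would introduce the aggregate sequences $P=\sum_{i=1}^t \paf_{A_i}$ and $Q=\sum_{i=1}^t \psd_{A_i}$, viewed as functions on $\bZ_v$. Applying the Wiener--Khinchin theorem to each summand and using linearity of the DFT together with Eq.~(\ref{jed:N=PAF}), I obtain the single identity $Q=\dft(P)$. This is the only place where the spectral theory enters; everything else is elementary linear algebra on length-$v$ sequences.

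For the forward direction I would assume $P=[\alpha_0,\alpha,\ldots,\alpha]$ and decompose it as $P=\alpha\,u+(\alpha_0-\alpha)\,e$, where $u=[1,1,\ldots,1]$ and $e=[1,0,\ldots,0]$. A direct computation of the two relevant transforms gives $\dft(u)(s)=v$ for $s=0$ and $0$ otherwise (orthogonality of the $v$-th roots of unity), and $\dft(e)(s)=1$ for all $s$. Substituting into $Q=\dft(P)$ yields $Q(0)=\alpha_0+(v-1)\alpha$ and $Q(s)=\alpha_0-\alpha$ for $s\neq 0$, which is precisely Eq.~(\ref{PSD_constants}) with the relations in Eq.~(\ref{PSD_to_PAF_relationships}).

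The converse I would obtain for free from invertibility of the DFT, rather than by a second computation. The two sequences $u$ and $e$ span a two-dimensional subspace $V$ consisting of exactly those sequences that are constant on all nonzero positions; the previous paragraph shows $\dft(u)=v\,e$ and $\dft(e)=u$, so the DFT restricts to a linear bijection of $V$ onto itself. Hence $Q=\dft(P)$ lies in $V$ if and only if $P$ does, which is the desired equivalence of Eq.~(\ref{PAF_constants}) and Eq.~(\ref{PSD_constants}). Solving the linear system in Eq.~(\ref{PSD_to_PAF_relationships}) for $\alpha_0,\alpha$ gives $\alpha=(\beta_0-\beta)/v$ and $\alpha_0=(\beta_0+(v-1)\beta)/v$, confirming that the correspondence between the two pairs of constants is itself a bijection.

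There is no genuinely hard step here; the only point requiring care is to resist proving the converse by an independent calculation and instead to recognize the ``constant at nonzero shifts'' condition as membership in a DFT-invariant two-dimensional subspace, from which both directions follow at once.
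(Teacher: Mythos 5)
Your proposal is correct, and its forward direction coincides with the paper's proof: both decompose $\sum_i\paf_{A_i}=(\alpha_0-\alpha)e+\alpha u$ with $e=[1,0,\ldots,0]$ and $u=[1,\ldots,1]$, invoke the Wiener--Khinchin theorem together with linearity of the DFT, and use the transforms $\dft(e)=u$, $\dft(u)=v\,e$. Where you genuinely diverge is the converse: the paper performs a second, independent computation, applying $\dft^{-1}$ to $(\beta_0-\beta)e+\beta u$ via $\dft^{-1}(e)=\frac{1}{v}u$ and $\dft^{-1}(u)=e$, whereas you observe that $V=\mathrm{span}\{e,u\}$ is precisely the set of sequences constant at all nonzero positions, that the forward computation already shows $\dft(V)=V$, and that injectivity of the DFT on all of $\bC^v$ then yields $P\in V\Leftrightarrow\dft(P)\in V$. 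Your route buys economy and makes the structural reason for the equivalence transparent (the ``constant off zero'' condition is membership in a DFT-invariant two-dimensional subspace), so both implications flow from the single identity $Q=\dft(P)$ and two transform evaluations; the paper's route buys concreteness, as the explicit inverse-DFT computation produces the inverse relations $\alpha=(\beta_0-\beta)/v$ and $\alpha_0=(\beta_0-\beta)/v+\beta$ directly, where you instead recover them by solving the linear system (\ref{PSD_to_PAF_relationships}) after the fact. Both arguments are complete and rest on the same two facts about the DFT.
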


\begin{proof}
Note that (\ref{PAF_constants}) can be written as:
$$
\sum_{i=1}^t {\paf}(A_i)=(\alpha_0-\alpha) \cdot [1,0,\ldots,0]
+ \alpha \cdot [1,\ldots, 1].
$$
By applying the DFT and by using the Wiener--Khinchin theorem and the fact that {\dft} is a linear operator, this implies that
$$
\begin{array}{rcl}
\displaystyle\sum_{i=1}^t {\psd}(A_i)&=&(\alpha_0-\alpha)\cdot 
{\dft}[1,0,\ldots,0]+\alpha \cdot {\dft}[1,\ldots, 1] \\
                          & = & (\alpha_0 - \alpha) \cdot [1,\ldots,1] + \alpha \cdot [v,0,\ldots,0] \\
                          & = & [\beta_0,\beta,\ldots,\beta] \\
\end{array}
$$
Hence (\ref{PSD_constants}) holds as well as (\ref{PSD_to_PAF_relationships}). 
Conversely, assume that (\ref{PSD_constants}) holds.
By applying the inverse {\dft} we obtain
$$
\begin{array}{rcl}
    \displaystyle\sum_{i=1}^t {\paf}(A_i)&=&(\beta_0-\beta)\cdot {\dft}^{-1}[1,0,\ldots,0]+\beta\cdot{\dft}^{-1}[1,\ldots,1] \\
                          &  = & \displaystyle\frac{\beta_0 - \beta}{v}[1,\ldots,1] + \beta [1,0,\ldots,0]  \\
                          & = & [\alpha_0, \alpha, \ldots, \alpha] \\
\end{array}
$$
\qed
\end{proof}

By solving the equations (\ref{PSD_to_PAF_relationships}) we 
obtain that
\begin{equation*}
    \alpha_0 = \displaystyle\frac{\beta_0 - \beta}{v}+ \beta, 
    \quad \alpha = \displaystyle\frac{\beta_0 - \beta}{v}.
\end{equation*}

Note that the PSD values are always nonnegative. Hence, if 
$A_1,\ldots,A_t$ are complementary complex sequences of length 
$v$ with the PSD-constants $\beta_0$ and $\beta$, then for 
$r=1,\ldots,t$ we have
\begin{equation} \label{jed:psd-test}
\psd_{A_r}(s)\le\beta, \quad s=1,2,\ldots,v-1.
\end{equation}
We shall refer to this inequality as the {\em PSD-test}.

In the following proposition we show that the associated sequences of the base blocks of an SDS are complementary sequences and we compute the PAF and PSD-constants. In a special 
case these constants were computed in 
\cite[Example 3]{FGS:2001}.

\begin{proposition} \label{SDS-kompl}
Let $A_1,\ldots,A_t$ be the $\{\pm1\}$-sequences associated to the base blocks $X_1,\ldots,X_t$ of an SDS with parameters 
$(v;k_1,\ldots,k_t;\lambda)$. Then  
\begin{equation} \label{jed:zbir-nor-A}
\sum_{i=1}^t N(A_i)=4n+(tv-4n)T.
\end{equation}
Moreover, the sequences $A_1,\ldots,A_t$ are complementary 
with $\paf$-constants 
\begin{equation} \label{jed:SDS-paf}
\alpha_0=tv, \quad \alpha=tv-4n,
\end{equation}
and $\psd$-constants 
\begin{equation} \label{jed:SDS-psd}
\beta_0=tv, \quad \beta=4n.
\end{equation}
\end{proposition}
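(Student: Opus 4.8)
The plan is to reduce the entire statement to the single group-ring identity (\ref{jed:zbir-nor-A}); once that is established, the complementarity and all four constants follow with essentially no further work. First I would replace each associated sequence by its base block via (\ref{jed:A-X}), so that $A_i = T - 2X_i$, and compute the norm $N(A_i) = A_i A_i^* = (T - 2X_i)(T^* - 2X_i^*)$. The computation is controlled by one fact: the all-ones element $T$ is absorbing, $x^j T = T$ for every $j$. From this we get $T^* = T$ and $T^2 = vT$, and, since $|X_i| = k_i$, also $X_i T = X_i^* T = k_i T$. Expanding the product and collecting terms then gives, for each $i$, the identity $N(A_i) = vT - 4k_i T + 4N(X_i)$.

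Summing over $i$ and inserting the lemma that characterises the SDS property, namely $\sum_{i=1}^t N(X_i) = n + \lambda T$ from (\ref{jed:zbir-nor}), yields $\sum_{i=1}^t N(A_i) = 4n + \bigl(tv - 4\sum_i k_i + 4\lambda\bigr)T$. The one bookkeeping step is to use the definition of $n$ in (\ref{par-n}), i.e. $\sum_i k_i = n + \lambda$, which collapses the coefficient of $T$ to $tv - 4n$ and proves (\ref{jed:zbir-nor-A}).

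The remaining claims are then immediate. By (\ref{jed:N=PAF}) we have $N_{A_i} = \paf_{A_i}$, so the coefficient sequence of $\sum_i N(A_i)$ is exactly $\sum_i \paf_{A_i}$; its constant term equals $4n + (tv - 4n) = tv$ and every other coefficient equals $tv - 4n$. This shows the $A_i$ are complementary with $\alpha_0 = tv$ and $\alpha = tv - 4n$. Applying Theorem \ref{Theorem:PAF_PSD_relationships} to these values then delivers the $\psd$-constants, the one that matters for later use being $\beta = \alpha_0 - \alpha = 4n$; the value of $\beta_0$ comes from the same relation $\beta_0 = \alpha_0 + (v-1)\alpha$, which one can cross-check directly since $\sum_i \psd_{A_i}(0) = \sum_i |A_i(1)|^2 = \sum_i (v - 2k_i)^2$ simplifies through (\ref{par-lambda}) and (\ref{par-n}).

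I do not anticipate a genuine obstacle: the proposition is an exercise in the group-ring calculus already set up. The only point demanding care is keeping the constant term of $\sum_i N(A_i)$ separate from its $T$-part, because that distinction is precisely what separates $\alpha_0$ from $\alpha$ (hence $\beta_0$ from $\beta$); a sign slip or a miscount in the $-4\sum_i k_i$ term would corrupt exactly the constants one is trying to identify. Beyond that, the whole argument is the substitution of the two defining relations (\ref{par-n}) and (\ref{jed:zbir-nor}) into the expanded norm.
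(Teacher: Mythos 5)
Your proof follows the paper's own argument essentially step for step: expand $N(A_i)=(T-2X_i)(T-2X_i^*)$ using $x^jT=T$, $T^2=vT$, $TX_i=TX_i^*=k_iT$ to get $N(A_i)=(v-4k_i)T+4N(X_i)$; sum over $i$ using (\ref{jed:zbir-nor}) and (\ref{par-n}) to obtain (\ref{jed:zbir-nor-A}); read off the $\paf$-constants via (\ref{jed:N=PAF}); and pass to the $\psd$-constants via Theorem \ref{Theorem:PAF_PSD_relationships}. In that sense the approach is identical and your computations are correct.

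There is, however, one point where your write-up is more careful than the paper, and it exposes a flaw in the statement itself. Theorem \ref{Theorem:PAF_PSD_relationships} gives $\beta_0=\alpha_0+(v-1)\alpha=tv+(v-1)(tv-4n)=tv^2-4n(v-1)$, which equals the claimed value $tv$ only when $tv=4n$. Your own cross-check $\beta_0=\sum_i(v-2k_i)^2$ confirms this: for the cyclic difference set $(7;3;1)$ (quadratic residues in $\bZ_7$, $t=1$, $n=2$) one gets $\beta_0=(7-6)^2=1$, not $tv=7$. So the first equation of (\ref{jed:SDS-psd}) is false in general; the paper's proof simply asserts that Theorem \ref{Theorem:PAF_PSD_relationships} delivers it, while the second equation, $\beta=4n$ --- the one actually needed for the PSD test --- is correct. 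You state the correct relation for $\beta_0$ but never evaluate it, so you do not notice that it contradicts the proposition you set out to prove; a complete treatment should flag that $\beta_0=tv$ holds exactly when $tv=4n$ (e.g. $t=2$, $v=2n$, the binary periodic complementary pair setting in which the paper later invokes this equation), and state the general value $\beta_0=tv^2-4n(v-1)$ otherwise.
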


\begin{proof}
Note that $x^i T=T$ for each $i$, and so we have $T^2=vT$ and $TX_i=TX_i^*=k_iT$. By Eq. (\ref{jed:A-X}) we have 
$N(A_i)=(T-2X_i)(T-2X_i^\star)$ which gives 
\begin{equation} \label{jed:NA-NX}
N(A_i)=(v-4k_i)T +4N(X_i).
\end{equation}
Summing over all $i = 1,\ldots,t$ and by using Eq. 
(\ref{jed:zbir-nor}), we obtain Eq. (\ref{jed:zbir-nor-A}).
Using Eq. (\ref{jed:N=PAF}), we have
$$ N(A_i)=\sum_{j=0}^{v-1} \paf_{A_i}(j)x^j. $$
By summing these equations over $i=1,\ldots,t$ and by using
(\ref{jed:zbir-nor-A}), we deduce that
\begin{equation*}
\sum_{i=1}^t \paf_{A_i}(j) =
\left\{
\begin{array}{cc}
tv-4n, & j \neq 0, \\
tv,    & j = 0. \\
\end{array}
\right.
\end{equation*}
Hence, the equations (\ref{jed:SDS-paf}) hold, and by 
using Theorem \ref{Theorem:PAF_PSD_relationships} we deduce that 
the equations (\ref{jed:SDS-psd}) also hold. 
\qed
\end{proof}

When searching for SDS, we can discard the trial base block $X_r$ if the associated sequence $A_r$ fails the PSD-test: 
$\psd_{A_r}(s)>\beta$ for some $s\ne0$. In this case we can 
restate the PSD-test as follows.

\begin{lemma} \label{le:Test}
Under the hypotheses of Proposition \ref{SDS-kompl}, for 
$r=1,\ldots,t$ we have
\begin{equation} \label{jed:Test}
\sum_{j=1}^{v-1} N_{X_r}(j)\cos\frac{2\pi js}{v}\le n-k_r, 
\quad s\in\{1,\ldots,v-1\}.
\end{equation}
\end{lemma}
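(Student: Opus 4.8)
The plan is to unwind the PSD-test (\ref{jed:psd-test}) supplied by Proposition \ref{SDS-kompl}, which under these hypotheses reads $\psd_{A_r}(s)\le\beta=4n$ for $s\in\{1,\ldots,v-1\}$, into an inequality involving only $N_{X_r}$. The bridge is the real-sequence form of the Wiener--Khinchin theorem (\ref{jed:W-K-realna}), which expresses $\psd_{A_r}(s)$ as a cosine sum of the coefficients $N_{A_r}(j)$, together with the explicit relation (\ref{jed:NA-NX}) between $N(A_r)$ and $N(X_r)$.

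First I would read off the coefficients of $N(A_r)$ from (\ref{jed:NA-NX}), namely $N(A_r)=(v-4k_r)T+4N(X_r)$. Since every coefficient of $T$ equals $1$, this gives $N_{A_r}(j)=(v-4k_r)+4N_{X_r}(j)$ for all $j$. In particular, using $N_{X_r}(0)=|X_r|=k_r$ one obtains $N_{A_r}(0)=v$, as it must be since $\paf_{A_r}(0)=\sum_j|a_j|^2=v$ for a $\{\pm1\}$-sequence.

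Next I would substitute this into (\ref{jed:W-K-realna}) and isolate the $j=0$ term. The one elementary identity needed is that for $s\ne0$ the $v$-th roots of unity sum to zero, so $\sum_{j=0}^{v-1}\cos\frac{2\pi js}{v}=0$ and hence $\sum_{j=1}^{v-1}\cos\frac{2\pi js}{v}=-1$. Applying this to the constant part $(v-4k_r)$ makes its contribution cancel against the $j=0$ term, leaving
$$
\psd_{A_r}(s)=4k_r+4\sum_{j=1}^{v-1}N_{X_r}(j)\cos\frac{2\pi js}{v}.
$$

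Finally I would insert this expression into the PSD-test $\psd_{A_r}(s)\le4n$ and divide by $4$, obtaining $k_r+\sum_{j=1}^{v-1}N_{X_r}(j)\cos\frac{2\pi js}{v}\le n$, which is precisely (\ref{jed:Test}) once $k_r$ is moved to the right-hand side. I do not foresee any genuine obstacle: the argument is entirely bookkeeping. The single point that demands care is the treatment of the $j=0$ term, where the value $N_{A_r}(0)=v$ and the contribution $-(v-4k_r)$ of the constant part (via the cosine-sum identity) must combine correctly to leave the clean constant $4k_r$.
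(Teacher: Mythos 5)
Your proposal is correct and follows essentially the same route as the paper: both use the PSD-test with $\beta=4n$ from Proposition \ref{SDS-kompl}, the relation $N(A_r)=(v-4k_r)T+4N(X_r)$, and the Wiener--Khinchin theorem, with the key observation that the constant part contributes nothing at nonzero shifts (you phrase this via $\sum_{j=0}^{v-1}\cos\frac{2\pi js}{v}=0$, the paper via the vanishing of the DFT of a constant function). The only difference is cosmetic: you work coefficient-wise while the paper works at the level of DFT operators applied to group-ring elements.
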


\begin{proof}
Recall that the DFT of a constant function vanishes for all 
shifts $s$ except for $s=0$. Hence, by using Eqs. (\ref{jed:N=PAF}) and (\ref{jed:NA-NX}), we obtain that  
$\dft(\paf_{A_r})(s)=4\cdot\dft(N_{X_r})(s)$. Thus, we must have  
$$ 
4\left( k_r+\sum_{j=1}^{v-1}
N_{X_r}(j)\cos\frac{2\pi js}{v} \right)\le\beta=4n, \quad s\ne0,
$$
i.e., (\ref{jed:Test}) holds.
\end{proof}

\section{Compression of sequences} \label{sec:Compression}

If we have a collection of complementary sequences of length 
$v=dm$, then we can compress them to obtain complementary 
sequences of length $d$. We refer to the ratio $v/d=m$ as 
the {\em compression factor}. Here is the precise definition.

\begin{definition}
Let $A = [a_0,a_1,\ldots,a_{v-1}]$ be a complex sequence of length $v = dm$ and set 
\begin{equation} \label{koef-kompr}
a_j^{(d)}=a_j+a_{j+d}+\ldots+a_{j+(m-1)d}, \quad 
j=0,\ldots,d-1. 
\end{equation}
Then we say that the sequence 
$A^{(d)} = [a_0^{(d)},a_1^{(d)},\ldots,a_{d-1}^{(d)}]$ 
is the {\em $m$-compression} of $A$.
\end{definition}

Let us now show that the complementarity is preserved 
by the compression process. At the same time we shall compute the 
PAF and PSD-constants of the compressed sequences.
\begin{theorem}
Let $A_i = [a_{i0},a_{i1},\ldots,a_{i,v-1}]$, $i=1,\ldots,t$, 
be complementary complex sequences of length $v=dm$ with the 
{\paf}-constants $\alpha_0,\alpha$. Then the corresponding $m$-compressed sequences $A_i^{(d)}=[a_{i0}^{(d)},\ldots,a_{i,d-1}^{(d)}]$, 
$i=1,\ldots,t$, are complementary with $\paf$-constants:

\begin{equation} \label{paf-konst-komp}
\alpha_0^{(d)}=\alpha_0+(m-1)\alpha, \quad 
\alpha^{(d)}=m\alpha.
\end{equation}
The \psd-constants of the original and compressed sequences are 
the same.
\label{Theorem:Main_Theorem}
\end{theorem}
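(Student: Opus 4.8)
The plan is to recognize the $m$-compression as a concrete ring homomorphism on the group ring and to transport the complementarity identity through it. Writing $A^{(d)}$ as an element of $\bC[y]/(y^d-1)$, one checks directly from the definition (\ref{koef-kompr}) that $A^{(d)}(y)=\pi(A(x))$, where $\pi:\bC[x]/(x^v-1)\to\bC[y]/(y^d-1)$ is the homomorphism induced by $x\mapsto y$ (it is well defined because $x^v-1$ maps to $y^v-1$, which is divisible by $y^d-1$ since $v=dm$). Because $\pi$ is a ring homomorphism commuting with the involution $*$ (as $x^*=x^{-1}\mapsto y^{-1}=y^*$), it respects the norm: $N(A^{(d)})=\pi(A)\pi(A)^*=\pi(A)\pi(A^*)=\pi(AA^*)=\pi(N(A))$. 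This single observation will do all the work.

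First I would extract from $N(A^{(d)})=\pi(N(A))$ the scalar identity
\[ \paf_{A^{(d)}}(s)=\sum_{k=0}^{m-1}\paf_A(s+kd),\qquad s=0,1,\ldots,d-1, \]
by applying $\pi$ to $N(A)=\sum_{r=0}^{v-1}\paf_A(r)\,x^r$ (using $N_A=\paf_A$ from (\ref{jed:N=PAF})) and collecting the terms with $r\equiv s\pmod d$, since $\pi$ sends $x^r\mapsto y^{r\bmod d}$. Those wishing to avoid the group ring can instead verify this identity directly: substituting (\ref{koef-kompr}) into $\paf_{A^{(d)}}(s)=\sum_{j=0}^{d-1}a^{(d)}_{j+s}\overline{a^{(d)}_j}$ produces a triple sum, and the key point is that, for each fixed $j$ and each fixed index $l$ in the second factor, letting $k$ run over a complete residue system modulo $m$ makes $k-l$ run over the same system, so by periodicity modulo $v$ the inner sum collapses to $\sum_{k}\paf_A(s+kd)$. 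I expect this bookkeeping step, the reindexing of the double sum over the two copies of $\{0,\dots,m-1\}$, to be the only genuinely delicate part of the argument.

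Next I would sum the identity over $i=1,\ldots,t$ and invoke complementarity (\ref{PAF_constants}) of the $A_i$. The cleanest route is to note that complementarity says $\sum_i N(A_i)=(\alpha_0-\alpha)+\alpha T$, and that $\pi(T)=m(1+y+\cdots+y^{d-1})$ because each residue class modulo $d$ is hit exactly $m$ times among $0,\ldots,v-1$. Applying $\pi$ therefore gives $\sum_i N(A_i^{(d)})=(\alpha_0-\alpha)+m\alpha\,(1+y+\cdots+y^{d-1})$, whose constant term is $\alpha_0+(m-1)\alpha$ and whose every other coefficient is $m\alpha$. Reading off the coefficients yields exactly $\alpha_0^{(d)}=\alpha_0+(m-1)\alpha$ and $\alpha^{(d)}=m\alpha$, proving the $A_i^{(d)}$ complementary with the asserted $\paf$-constants. (Equivalently, in scalar form: at $s=0$ the shifts $s+kd$ pick up $\paf_A(0)$ once and the $m-1$ nonzero multiples of $d$ once each, while for $s\ne0$ all $m$ shifts $s+kd$ are nonzero modulo $v$.)

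Finally, the statement about the $\psd$-constants needs no new work: since the $A_i^{(d)}$ are complementary, Theorem~\ref{Theorem:PAF_PSD_relationships} applies to them in length $d$, giving $\beta_0^{(d)}=\alpha_0^{(d)}+(d-1)\alpha^{(d)}$ and $\beta^{(d)}=\alpha_0^{(d)}-\alpha^{(d)}$. Substituting (\ref{paf-konst-komp}) and using $dm=v$, a one-line computation gives $\beta_0^{(d)}=\alpha_0+(v-1)\alpha=\beta_0$ and $\beta^{(d)}=\alpha_0-\alpha=\beta$, which by (\ref{PSD_to_PAF_relationships}) are precisely the original $\psd$-constants. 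This closes the proof.
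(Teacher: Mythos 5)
Your proof is correct, but it takes a genuinely different route from the paper's. The paper establishes the key identity ${\paf}_{A_i^{(d)}}(k)=\sum_{r=0}^{m-1}{\paf}_{A_i}(k+rd)$ by direct computation: it expands the compressed PAF into a triple sum and performs the substitution $r\to r+s$ (precisely the reindexing you call the delicate step and relegate to a parenthetical alternative), then evaluates $\sum_{i}\sum_{r}{\paf}_{A_i}(k+rd)$ by observing that the value $\alpha_0$ occurs only when $k=r=0$, and finally invokes Theorem \ref{Theorem:PAF_PSD_relationships} for the PSD claim, just as you do. Your main argument instead packages everything algebraically: compression is the ring homomorphism $\pi:\bC[x]/(x^v-1)\to\bC[y]/(y^d-1)$, $x\mapsto y$, which is well defined since $d\mid v$ and commutes with the involution, hence with norms, so $N(A_i^{(d)})=\pi(N(A_i))$; applying $\pi$ to the complementarity identity $\sum_i N(A_i)=(\alpha_0-\alpha)+\alpha T$ and using $\pi(T)=m(1+y+\cdots+y^{d-1})$ reads off both constants in (\ref{paf-konst-komp}) at once. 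What your route buys: the preservation of complementarity becomes structurally transparent (complementarity says $\sum_i N(A_i)$ lies in the span of $1$ and $T$, and $\pi$ maps that span into the corresponding span for length $d$), the homomorphism property replaces all index bookkeeping, and the argument generalizes immediately to compression along any quotient of a finite abelian group. What the paper's route buys: it is elementary and self-contained, needing no verification that $\pi$ is well defined and $*$-equivariant. Your explicit final computation $\beta_0^{(d)}=\alpha_0^{(d)}+(d-1)\alpha^{(d)}=\alpha_0+(v-1)\alpha=\beta_0$ and $\beta^{(d)}=\alpha_0^{(d)}-\alpha^{(d)}=\alpha_0-\alpha=\beta$ also spells out the one-line step that the paper leaves implicit in its appeal to Theorem \ref{Theorem:PAF_PSD_relationships}.
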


\begin{proof}
By using (\ref{koef-kompr}) we compute ${\paf}_{A_i^{(d)}}(k)$ for arbitrary shift $k$:
$$
\begin{array}{rcl}
    {\paf}_{A_i^{(d)}}(k)  & = & \displaystyle\sum_{j=0}^{d-1} a_{i,j+k}^{(d)} \overline{a_{i,j}^{(d)}}    \\
                        & = & \displaystyle\sum_{r,s = 0}^{m-1} \displaystyle\sum_{j = 0}^{d-1} a_{i,j+k+rd} \overline{a_{i,j+sd}}. \\
\end{array}
$$
By applying the substitution $r \rightarrow r+s$, we obtain
$$
\begin{array}{rcl}
    {\paf}_{A_i^{(d)}}(k)  & = & \displaystyle\sum_{r,s = 0}^{m-1} \displaystyle\sum_{j = 0}^{d-1} a_{i,j+sd+k+rd} \overline{a_{i,j+sd}} \\
                        & = & \displaystyle\sum_{r = 0}^{m-1} {\paf}_{A_i}(k+rd) \\
\end{array}
$$
By summing over $i$ from $1$ to $t$, we obtain
$$
\begin{array}{rcl}
    \displaystyle\sum_{i=1}^t {\paf}_{A_i^{(d)}}(k) & = & \displaystyle\sum_{r = 0}^{m-1} \displaystyle\sum_{i=1}^{t} {\paf}_{A_i}(k+rd). \\
\end{array}
$$
Since ${\paf}_{A_i}(k+rd)$ is equal to $\alpha_0$ if $k=r=0$ and is equal to $\alpha$ otherwise, we conclude that the sequences 
$A_1^{(d)},\ldots,A_t^{(d)}$ are complementary and that their 
PAF-constants are given by Eqs. (\ref{paf-konst-komp}). 
The assertion about the PSD-constants follows from Theorem 
\ref{Theorem:PAF_PSD_relationships}.
\qed
\end{proof}

By applying Theorem \ref{Theorem:Main_Theorem} directly to SDS with parameters $(v;k_1,\ldots,k_t;\lambda)$, we obtain the following result.

\begin{theorem}
Let $A_1,\ldots,A_t$ be the sequences associated to 
the base blocks of an SDS with parameters 
$(v;k_1,\ldots,k_t;\lambda)$. Suppose that $v=dm$ and let 
$A_1^{(d)},\ldots,A_t^{(d)}$ be the corresponding $m$-compressed sequences. Then the PAF-constants of the compressed sequences 
are given by 
\begin{equation}
\alpha_0^{(d)}=m(tv-4n)+4n,\quad \alpha^{(d)}=m(tv-4n),
\label{Equation_General_Constraint}
\end{equation}
where $n$ is defined by Eq. (\ref{par-n}).
\label{Theorem:Generalized_Constraint}
\end{theorem}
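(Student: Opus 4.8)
The plan is to obtain this theorem as an immediate specialization of Theorem \ref{Theorem:Main_Theorem}, feeding in the PAF-constants of the uncompressed collection supplied by Proposition \ref{SDS-kompl}. Because both ingredients are already established, there is no genuine obstacle; the argument reduces to identifying the correct input constants and carrying out a one-line substitution.

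First I would invoke Proposition \ref{SDS-kompl}. It guarantees that the $\{\pm1\}$-sequences $A_1,\ldots,A_t$ associated to the base blocks of an SDS with parameters $(v;k_1,\ldots,k_t;\lambda)$ are complementary, and records their PAF-constants in Eq. (\ref{jed:SDS-paf}) as $\alpha_0=tv$ and $\alpha=tv-4n$. Together with the standing assumption $v=dm$, this verifies the full hypothesis of Theorem \ref{Theorem:Main_Theorem} and provides the two numbers that drive its conclusion.

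Next I would apply Theorem \ref{Theorem:Main_Theorem} to the collection $A_1,\ldots,A_t$. That theorem yields that the $m$-compressed sequences $A_1^{(d)},\ldots,A_t^{(d)}$ are again complementary, with PAF-constants given by the substitution rule (\ref{paf-konst-komp}), namely $\alpha_0^{(d)}=\alpha_0+(m-1)\alpha$ and $\alpha^{(d)}=m\alpha$. Substituting $\alpha_0=tv$ and $\alpha=tv-4n$ immediately gives $\alpha^{(d)}=m(tv-4n)$, while the one-line simplification $\alpha_0^{(d)}=tv+(m-1)(tv-4n)=m(tv-4n)+4n$ produces the remaining formula in (\ref{Equation_General_Constraint}).

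The only point demanding any attention is the asymmetric role of $\alpha_0$ versus $\alpha$ under the substitution. In the compression the zero-shift constant collects the single diagonal value $\alpha_0$ together with the $m-1$ off-diagonal contributions coming from the nonzero shifts congruent to $0$ modulo $d$, whereas every nonzero compressed shift collects exactly $m$ off-diagonal contributions; this is precisely what accounts for the extra additive term $4n$ appearing in $\alpha_0^{(d)}$ but not in $\alpha^{(d)}$. Finally, by the last assertion of Theorem \ref{Theorem:Main_Theorem} the PSD-constants are unchanged by compression, so they remain equal to the values $\beta_0=tv$ and $\beta=4n$ of Eq. (\ref{jed:SDS-psd}); the constant $\beta=4n$ is the quantity relevant for the PSD-test in the subsequent searches.
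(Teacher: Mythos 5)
Your proposal is correct and follows exactly the paper's own proof: cite Proposition \ref{SDS-kompl} for the uncompressed PAF-constants $\alpha_0=tv$, $\alpha=tv-4n$, then apply Theorem \ref{Theorem:Main_Theorem} and simplify via (\ref{paf-konst-komp}). The paper leaves the substitution $\alpha_0^{(d)}=tv+(m-1)(tv-4n)=m(tv-4n)+4n$ implicit, whereas you carry it out explicitly; otherwise the two arguments are identical.
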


\begin{proof}
By Proposition \ref{SDS-kompl}, the sequences $A_1,\ldots,A_t$ 
are complementary with {\paf}-constants given by Eqs. 
(\ref{jed:SDS-paf}). It remains to apply Theorem 
\ref{Theorem:Main_Theorem}.
\qed
\end{proof}

\begin{remark}
Theorem \ref{Theorem:Generalized_Constraint} generalizes 
\cite[Theorem 1]{DK:JCD:2012}, which deals with the special 
case: SDS $(v;r,s;\lambda)$ with $n=r+s-\lambda=(v-1)/2$. 
(These SDS are used to construct circulant D-optimal matrices.) \end{remark}

\begin{remark}
Theorem \ref{Theorem:Generalized_Constraint} has two useful corollaries for
\begin{itemize}
\item $m = 2$, in which case $|a_{ij}^{(d)}| \in \{ 0, 2 \}$,
\item $m = 3$, in which case $|a_{ij}^{(d)}| \in \{ 1, 3 \}$. \end{itemize}
\end{remark}
An easy counting argument allows us to compute the number of 
$a_{ij}^{(d)}$ that assume a specific absolute value.

\begin{corollary} 
With the hypotheses and notation of Theorem \ref{Theorem:Generalized_Constraint}, let $m = 2$ and denote by $\nu_0, \nu_2$ the number of $a_{ij}^{(d)}$ terms in all 
$2$-compressed sequences $A_i^{(d)}$ that have absolute value equal to $0,2$ respectively. Then $\nu_0=n$ and $\nu_2=td-n$.
\label{Corollary:2-compression}
\end{corollary}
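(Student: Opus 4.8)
The plan is to pin down the two unknowns $\nu_0$ and $\nu_2$ by producing two independent linear relations between them and solving. The first relation is immediate: since each of the $t$ compressed sequences $A_i^{(d)}$ has exactly $d$ terms, the total number of compressed coefficients is $td$, and by the preceding remark (the case $m=2$) every one of them has absolute value either $0$ or $2$. Hence $\nu_0+\nu_2=td$. This alone is not enough, so the real work is locating a second relation.

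For that, I would evaluate the periodic autocorrelation of the compressed sequences at the zero shift. By the definition of $\paf$, for each $i$ we have $\paf_{A_i^{(d)}}(0)=\sum_{j=0}^{d-1}|a_{ij}^{(d)}|^2$. Since each $a_{ij}^{(d)}$ is a sum of two $\pm1$ entries, it is real with $(a_{ij}^{(d)})^2\in\{0,4\}$; precisely, a term contributes $4$ to this sum exactly when $|a_{ij}^{(d)}|=2$, and $0$ when $|a_{ij}^{(d)}|=0$. Summing over all $i$ therefore gives $\sum_{i=1}^t\paf_{A_i^{(d)}}(0)=4\nu_2$, which reads the shift-zero PAF combinatorially as a weighted count of the absolute-value-$2$ terms.

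On the other hand, that same sum is by definition the shift-zero PAF-constant $\alpha_0^{(d)}$ of the compressed collection. By Theorem \ref{Theorem:Generalized_Constraint} with $m=2$ we have $\alpha_0^{(d)}=2(tv-4n)+4n=2tv-4n$, and substituting $v=2d$ yields $\alpha_0^{(d)}=4td-4n$. Equating the two readings, $4\nu_2=4td-4n$, so $\nu_2=td-n$, and then the counting relation $\nu_0+\nu_2=td$ forces $\nu_0=n$, as claimed.

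There is no genuine obstacle in this argument; the only point requiring care is the bookkeeping that lets the shift-zero PAF do double duty — on the combinatorial side it equals $4\nu_2$, and on the structural side it is supplied by Theorem \ref{Theorem:Generalized_Constraint}. Matching these two expressions for the same quantity is the entire content of the ``easy counting argument.''
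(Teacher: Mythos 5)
Your proof is correct and follows essentially the same route as the paper: both arguments equate the combinatorial reading $0\cdot\nu_0+4\cdot\nu_2$ of the zero-shift PAF sum with the value $\alpha_0^{(d)}=4(td-n)$ supplied by Theorem \ref{Theorem:Generalized_Constraint}, then finish with the counting relation $\nu_0+\nu_2=td$. Your write-up merely spells out more explicitly the bookkeeping that the paper leaves implicit.
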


\begin{proof}
Since $m=2$, from Eq. (\ref{Equation_General_Constraint}) we have $\alpha_0^{(d)}=4(td-n)$. Therefore 
$0\cdot\nu_0+4\cdot\nu_2=4(td-n)$, i.e., $\nu_2=td-n$. As 
$\nu_0+\nu_2=td$, we have $\nu_0=n$.
\qed
\end{proof}

\begin{corollary}
With the hypotheses and notation of Theorem \ref{Theorem:Generalized_Constraint}, let $m = 3$ and denote by $\nu_1,\nu_3$ the number of $a_{ij}^{(d)}$ terms in the $3$-compressed sequences $A_i^{(d)}$ that have absolute value equal to $1,3$ respectively. Then $\nu_1=n$ and $\nu_3=td-n$.
\label{Corollary:3-compression}
\end{corollary}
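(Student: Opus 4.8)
The plan is to follow the template of the proof of Corollary \ref{Corollary:2-compression} verbatim, changing only the two numerical inputs appropriate to $m=3$. The conceptual content is a two-variable linear system: I know the total number of compressed terms, and I know their total squared magnitude (which is exactly the shift-$0$ PAF-constant $\alpha_0^{(d)}$ supplied by Theorem \ref{Theorem:Generalized_Constraint}). Since the original $A_i$ are $\{\pm1\}$-sequences, each compressed term $a_{ij}^{(d)}=a_{ij}+a_{i,j+d}+a_{i,j+2d}$ is a sum of three $\pm1$ values, so its absolute value is $3$ (three equal signs) or $1$ (two against one); hence $|a_{ij}^{(d)}|^2\in\{1,9\}$. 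This is the only place where the hypothesis $m=3$ enters, replacing the value set $\{0,4\}$ used in the $m=2$ case.

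First I would specialize Theorem \ref{Theorem:Generalized_Constraint} to $m=3$. Because $v=dm=3d$ we have $tv=3td$, so Eq.~(\ref{Equation_General_Constraint}) yields
$$
\alpha_0^{(d)}=3(3td-4n)+4n=9td-8n.
$$
Next I would recall that the PAF-constant at shift $0$ is precisely the sum of squared magnitudes of all compressed terms, namely
$$
\alpha_0^{(d)}=\sum_{i=1}^t {\paf}_{A_i^{(d)}}(0)=\sum_{i=1}^t\sum_{j=0}^{d-1}\left|a_{ij}^{(d)}\right|^2.
$$
Partitioning this sum according to whether $|a_{ij}^{(d)}|$ equals $1$ or $3$, and inserting $1^2=1$ and $3^2=9$, gives $1\cdot\nu_1+9\cdot\nu_3=9td-8n$. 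Combining this with the total count $\nu_1+\nu_3=td$ (there are $t$ compressed sequences, each of length $d$) and subtracting, I obtain $8\nu_3=8td-8n$, whence $\nu_3=td-n$ and then $\nu_1=td-\nu_3=n$, exactly as asserted.

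I do not anticipate any genuine obstacle here; the argument is a routine counting computation of the same shape as the $m=2$ corollary. The only points that merit a moment's care are the arithmetic specialization $tv=3td$ feeding Theorem \ref{Theorem:Generalized_Constraint}, and the observation that for a sum of three $\pm1$ values the outcome $0$ is impossible, so the magnitude set is $\{1,3\}$ rather than $\{0,\dots\}$; this is what guarantees the linear system is nondegenerate and produces integer solutions.
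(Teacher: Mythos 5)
Your proposal is correct and follows exactly the paper's argument: specialize Eq.~(\ref{Equation_General_Constraint}) to $m=3$ to get $\alpha_0^{(d)}=9td-8n$, interpret this shift-zero PAF constant as the total of the squared magnitudes so that $1\cdot\nu_1+9\cdot\nu_3=9td-8n$, and solve against $\nu_1+\nu_3=td$. You merely spell out two steps the paper leaves implicit (that $\alpha_0^{(d)}$ equals $\sum_{i,j}|a_{ij}^{(d)}|^2$, and that a sum of three $\pm1$'s has absolute value in $\{1,3\}$, which the paper records in the remark preceding the corollaries), so there is no substantive difference.
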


\begin{proof}
Since $m=3$, from Eq. (\ref{Equation_General_Constraint}) we have $\alpha_0^{(d)}=9td-8n$. Therefore 
$1\cdot\nu_1+9\cdot\nu_3=9td-8n$. As $\nu_1+\nu_3=td$, we have 
$\nu_1=n$ and $\nu_3=td-n$.
\qed
\end{proof}

\section{Computational results for SDS with two base blocks} 
\label{sec:results}

\subsection{Necklaces, bracelets and charmed bracelets}

When compressing the pair $A,B$ of periodic complementary 
$\{\pm1\}$ sequences of length $v=dm$, with compression 
factor $m$, we obtain the pair $A^{(d)},B^{(d)}$ of 
periodic complementary sequences of length $d$ whose elements
are integers belonging to the set $\{m,m-2,\ldots,2-m,-m\}$.
The first equation of (\ref{jed:SDS-psd}) shows that the
sum of squares of the elements of $A^{(d)}$ and $B^{(d)}$ 
together is equal to $2v$. In our applications we only used
the compression factors $m=2$ and $m=3$. In these two cases the
Corollaries \ref{Corollary:2-compression} and 
\ref{Corollary:3-compression} determine the total number of 
squares $0^2,2^2$ and $1^2,3^2$, respectively. There are 
several ways to distribute these squares over the sequences
$A^{(d)}$ and $B^{(d)}$, and we treat each of them separately.

We shall use combinatorial objects known as necklaces, 
bracelets and charmed bracelets \cite{Sawada:2001,Sawada:2003}
Roughly speaking the {\em necklaces} are cyclic arrangements of $v$ objects where we do not distinguish the arrangements obtained by cyclic shifts. If we enlarge the equivalence classes by allowing also that the arrangement be reversed, then we refer to these equivalence classes as {\em bracelets}. We can further enlarge these equivalence classes by moving, for each $i$, the object in position $i$ to the position $si \pmod{v}$ where $s$ is a fixed integer relatively prime to $v$. We shall refer to this operation as a {\em multiplication} by $s$. Note that the multiplication by $s=-1$ is the same as the reversal. We refer to these enlarged equivalence classes as {\em charmed racelets}. We emphasize that the multiplication operation does change the PAF of a sequence, but when performed simultaneously on a pair of complementary sequences it preserves the complementarity 
property.

The sequences $A$ and $B$, as well as the compressed sequences $A^{(d)}$ and $B^{(d)}$, can be viewed as being cyclic 
arrangements of length $v$ and $d$, respectively. The cyclic shifts and the reversal do not change the periodic autocorrelation of the sequence, and so they can be applied separately on $A$ and $B$ without destroying the complementarity property. However, the multiplication by $s\ne-1 \pmod{v}$ has to be performed simultaneously, with the same $s$, on both $A$ and $B$ as otherwise the complementarity property may be destroyed. 

In our searches using the compression method we first construct 
the compressed sequences $A^{(d)}$ and $B^{(d)}$. They must 
have the specified row sums and pass the PSD test. We choose 
the first sequence to be a representative of a charmed 
bracelet while the second sequence can be chosen only as a 
representative of an ordinary bracelet. Then we proceed to 
select the pairs $A^{(d)},B^{(d)}$ which are complementary. If there are no such pairs this means that the SDS that we are looking for do not exist. Otherwise we examine each of the complementary pairs $A^{(d)},B^{(d)}$ and try to lift them to obtain a complementary pair $A,B$.

\subsection{The range $v \leq 50$} \label{sub:range-1}

We shall prove the following theorem resolving the existence of 
all but one of the $15$ open cases from the list in the Introduction. The only remaining undecided case is now $(49;21,4;9)$. 

\begin{theorem} $ $ \label{thm:v<=50}  \\
\begin{itemize}
\item There do not exist SDS with following parameters:
$(41;15,6;6)$
$(43;9,4;2)$
$(44;19,2;8)$
$(45;18,2;7)$
$(46;21,6;10)$
$(47;9,5;2)$
$(47;12,3;3)$
$(47;14,2;4)$
$(47;15,5;5)$
$(48;14,3;4)$
$(49;10,3;2)$
$(50;8,7;2)$
$(50;20,4;8)$
\item There exist SDS with the following parameters:
$(50;22,21;18)$
\end{itemize}
\end{theorem}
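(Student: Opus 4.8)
The plan is to treat Theorem~\ref{thm:v<=50} as a computer-assisted result, reducing each of the fourteen parameter sets to a finite, exhaustible search governed by the group-ring identity~(\ref{jed:zbir-nor}) and the PSD-test. By Proposition~\ref{SDS-kompl} the associated $\{\pm1\}$-sequences $A_1,A_2$ of any SDS $(v;r,s;\lambda)$ are complementary with $\psd$-constant $\beta=4n$, and since every $\psd$ value is nonnegative each base block must satisfy $\psd_{A_i}(s)\le 4n$ for all $s\ne 0$; this furnishes a strong filter that I would apply before any enumeration. The cases then separate according to whether $v$ admits a compression factor $m\in\{2,3\}$, i.e.\ whether $v$ is divisible by $2$ or $3$.

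For the lengths $v\in\{41,43,47,49\}$, which are prime or equal to $7^2$ and so carry no factor $2$ or $3$, compression is unavailable and I would search directly. The efficient route is to enumerate only the \emph{smaller} base block, say $X_2$ of size $s$, up to charmed-bracelet equivalence (cyclic shift, reversal, and multiplication by a unit of $\bZ_v$), discarding at once every representative that fails the PSD-test. For each surviving $X_2$ the complementarity identity prescribes the entire periodic autocorrelation of the larger block through $N(X_1)=n+\lambda T-N(X_2)$, and I would then recover $X_1$ by backtracking with constraint propagation on the prescribed values $N_{X_1}(j)$. Non-existence for these parameters follows once the enumeration is exhausted without producing a valid $X_1$.

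For the composite lengths $v\in\{44,45,46,48,50\}$ I would instead exploit compression via Theorems~\ref{Theorem:Main_Theorem} and~\ref{Theorem:Generalized_Constraint}. Writing $v=dm$ with $m\in\{2,3\}$, every hypothetical SDS compresses to a complementary pair $A_1^{(d)},A_2^{(d)}$ of length $d$ whose $\paf$-constants are fixed by~(\ref{Equation_General_Constraint}) and whose entry-multiplicities are fixed by Corollaries~\ref{Corollary:2-compression} and~\ref{Corollary:3-compression}. I would enumerate all such short complementary pairs---a far cheaper search at length $d$---representing the first sequence by a charmed bracelet and the second by an ordinary bracelet as in Section~\ref{sec:results}, and for each candidate I would attempt to \emph{lift} it to a full-length $\{\pm1\}$ pair forming the SDS. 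The lifts are themselves constrained, since a compressed entry of absolute value $m$ forces all $m$ of its preimages to share a common sign; thus only the entries of smaller absolute value contribute free choices. Non-existence follows whenever no short complementary pair admits a lift satisfying~(\ref{jed:zbir-nor}).

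The existence claim for $(50;22,21;18)$ is the easy endpoint. Here $n=r+s-\lambda=25=v/2$, so $A_1,A_2$ form a binary complementary pair, and the same compression-and-lift search at a divisor $d\mid 50$ produces an explicit solution; I would simply exhibit the two base blocks $X_1,X_2\subseteq\bZ_{50}$ and verify directly that $N(X_1)+N(X_2)=25+18\,T$, which by~(\ref{jed:zbir-nor}) certifies the SDS. The main obstacle throughout is computational rather than conceptual: I must keep the non-existence searches genuinely exhaustive while rendering them tractable. The PSD-test pruning, the reduction to charmed-bracelet and bracelet representatives, and---for the composite lengths---the drop from length $v$ to length $d$ are exactly what make this possible, and the delicate point is to apply the symmetry reductions consistently with the complementarity-preserving operations of Section~\ref{sec:results} (in particular, performing any multiplication simultaneously on both blocks) so that no genuine solution is inadvertently discarded.
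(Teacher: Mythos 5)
Your proposal is correct and takes essentially the same approach as the paper: a computer-assisted, case-by-case exhaustive search built on the {\psd} test from Proposition \ref{SDS-kompl}, compression (Theorems \ref{Theorem:Main_Theorem} and \ref{Theorem:Generalized_Constraint} with Corollaries \ref{Corollary:2-compression} and \ref{Corollary:3-compression}) together with charmed-bracelet/ordinary-bracelet normalization and lifting, and explicit verified base blocks for $(50;22,21;18)$. The only divergence is tactical rather than methodological: the paper disposes of nine of the thirteen non-existence cases (including the composite lengths $44$, $45$, $48$) immediately because \emph{no} candidate A-sequence for the larger block survives the {\psd} test, reserving compression for $(46;21,6;10)$ and $(50;20,4;8)$ and direct matching for $(41;15,6;6)$ and $(50;8,7;2)$, whereas you would run heavier compression or two-stage searches in those cases.
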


{\bf Proof} We proceed by examining the $14$ $(v;r,s;\lambda)$ cases one by one.
\begin{enumerate}

\item For the case $(41;15,6;6)$ the application of the {\psd} test with the constant value $60$
gave $1040$ normalized candidate A-sequences and $13104$ normalized candidate B-sequences.
Subsequently, there was no match found between these two sets of candidate sequences.

\item For the case $(43;9,4;2)$ there are no A-sequences that pass the {\psd} test with the constant value $44$,
therefore we conclude immediately that such SDS do not exist.

\item For the case $(44;19,2;8)$ there are no A-sequences that pass the {\psd} test with the constant value $52$,
therefore we conclude immediately that such SDS do not exist.

\item $(45;18,2;7)$ there are no A-sequences that pass the {\psd} test with the constant value $52$,
therefore we conclude immediately that such SDS do not exist.

\item For the case $(46;21,6;10)$ we used $2$-compression to prove that there do not exist such SDS.
Here are the details of this computation. First note that in this case we have $n = r+s-\lambda = 21+6-10 = 17$.
Taking $m = 2, d = 23$ in Theorem \ref{Theorem:Generalized_Constraint}, we see that we need to find two sequences of length $23$ with elements from $\{-2,0,+2\}$  such that their {\psd} values add up to $4n = 4 \cdot 17 = 68$. Since we know from corollary \ref{Corollary:2-compression}
that the total number of $0$ elements in the two sequences is equal to $\nu_0 = 17$ and that the total number of $\pm 2$ elements in the two sequences is equal to $\nu_2 = 29$, we deduce that there are four cases to consider, using ordinary and charmed bracelets \cite{Sawada:2001}.
For each one of these four cases: (1) we compute
the charmed bracelets that give $2$-compressed sequences of length $23$ all of whose {\psd} values are smaller than $68$
and (2) we compute the ordinary bracelets that give $2$-compressed sequences of length $23$ all of whose {\psd} values are smaller than $68$. We summarize the results in the following tables.

\begin{tabular}{c|c|c}
\multicolumn{3}{c}{A-sequences} \\
Case & charmed bracelets & charmed bracelets passing {\psd} \\
\hline
1 & 2,116,296 & 85   \\
2 & 475,020   & 2,009 \\
3 & 54,264    & 4,552 \\
4 & 3,015     & 1,442 \\
\end{tabular}

\begin{tabular}{c|c|c}
\multicolumn{3}{c}{B-sequences} \\
Case & ordinary bracelets & ordinary bracelets passing {\psd} \\
\hline
1 & 2,277 & 1,749   \\
2 & 3,685   & 1,419 \\
3 & 1,210    &  22 \\
4 &   44     &   0 \\
\end{tabular}

Notice that case 4 can be eliminated at this stage, since there are no ordinary bracelets that pass the {\psd} test.
The next step is for each of the remaining three cases to look for pairs of sequences that have constant {\paf} and
we summarize the results in the following table (where the symbol $\rightarrow$ indicates removal of sequences with duplicate PAF)

\begin{tabular}{c|c|c|c}
Case & A-sequences & B-sequences & \# of pairs\\
\hline
1 & 85 $\rightarrow$ 84       & 1,749 $\rightarrow$ 1,716  & 39 \\
2 & 2,009 $\rightarrow$ 1,970 & 1,419 $\rightarrow$ 1,419  & 34 \\
3 & 4,552 $\rightarrow$ 4,497 &  22   $\rightarrow$ 22     & 0 \\
\end{tabular}

Notice that case 3 can be eliminated at this stage, since there are no pairs of matching sequences at all.
Subsequently we used the $39+34 = 73$ matching pairs of sequences of length $23$ with elements from $\{-2,0,+2\}$ to
construct all the corresponding pairs of uncompressed sequences of length $46$ and check whether they form an SDS with
the required parameters. We did not find any such SDS.

\item For the case $(47;9,5;2)$ there are no A-sequences that pass the {\psd} test with the constant value $48$,
therefore we conclude immediately that such SDS do not exist.

\item For the case $(47;12,3;3)$ there are no A-sequences that pass the {\psd} test with the constant value $48$,
therefore we conclude immediately that such SDS do not exist.

\item For the case $(47;14,2;4)$ there are no A-sequences that pass the {\psd} test with the constant value $48$,
therefore we conclude immediately that such SDS do not exist.

\item For the case $(47;15,5;5)$ there are no A-sequences that pass the {\psd} test with the constant value $60$,
therefore we conclude immediately that such SDS do not exist.

\item For the case $(48;14,3;4)$ there are no A-sequences that pass the {\psd} test with the constant value $52$,
therefore we conclude immediately that such SDS do not exist.

\item For the case $(49;10,3;2)$ there are no A-sequences that pass the {\psd} test with the constant value $44$,
therefore we conclude immediately that such SDS do not exist.

\item For the case $(50;8,7;2)$ the application of the {\psd} test with the constant value $52$
gave $1130$ normalized candidate A-sequences and $2910$ normalized candidate B-sequences.
Subsequently, there was no match found between these two sets of candidate sequences.

\item For the case $(50;20,4;8)$ we used $2$-compression to prove that there do not exist such SDS.

\item We give four non-equivalent examples of SDS 
$(50;22,21;18)$:
$$
\begin{array}{l}
  \{ 0,1,2,3,6,7,9,13,14,16,18,20,22,23,26,27,30,35,37,41,45,46 \} \\
  \{ 0,1,2,3,4,5,6,8,11,12,14,17,20,22,29,30,32,37,38,39,42 \}
\end{array}
$$

$$
\begin{array}{l}
 \{ 0,1,2,3,4,6,7,8,9,14,16,18,20,21,25,31,32,35,36,42,44,45 \} \\
 \{ 0,1,2,4,5,8,9,10,12,14,18,21,23,24,27,29,32,34,35,39,42 \}
\end{array}
$$

$$
\begin{array}{l}
 \{ 0,1,2,3,5,8,9,11,14,15,19,21,24,25,29,30,32,36,38,39,41,43 \} \\
 \{ 0,1,3,5,6,7,8,9,10,13,16,18,20,21,24,25,31,32,33,37,41 \}
\end{array}
$$

$$
\begin{array}{l}
  \{ 0,2,3,4,6,9,10,12,13,17,19,20,24,25,28,29,30,33,38,39,41,47 \} \\
  \{ 0,1,3,5,6,7,8,10,12,13,14,17,20,22,24,28,32,37,38,39,40 \}
\end{array}
$$

All four examples are in the canonical form defined in \cite{Djokovic:AnnComb:2011} and since they are different, this implies that they are non-equivalent. Note that since $2v-4n = 2 \cdot 50 - 4 \cdot 25 = 0$, these SDS give rise to $\{ \pm 1 \}$ sequences of length $50$ with {\paf} zero, because of 
(\ref{jed:SDS-paf}).

\end{enumerate}

\qed

\subsection{The range $v > 50$} \label{sub:range-2}

In this section we consider the  SDS $(v;r,s;\lambda)$ with 
$v=2n$, where $n=r+s-\lambda$. In particular $v$ is even. 
Recall from Proposition \ref{SDS-kompl} that these SDS give rise to binary periodic complementary pairs, i.e., pairs of $\{ \pm 1 \}$ sequences of length $v$ with PAF constant $\alpha=0$, because of (\ref{jed:SDS-paf}).

Since $52$ is a Golay number, there exist Golay pairs of length 
52. At the same time they are also binary periodic complementary 
pairs.
Since the Diophantine equations $x^2+y^2 = 2 \cdot 54$ and $x^2+y^2 = 2 \cdot 56$ do not have solutions, there are no binary periodic complementary pairs of lengths 54 and 56.
Therefore, the first interesting value of $v > 50 $ is $v = 58$, since this value is not excluded by the restriction of Arasu and Xiang \cite{Arasu:Xiang:DCC:1992}.
In this paper we find $4$ non-equivalent examples of SDS $(58;27,24;22)$, for the first time.
These SDS give $4$ respective pairs of binary sequences of length $58$ with zero periodic autocorrelation function. 

We give four non-equivalent examples of SDS $(58;27,24;22)$:
$$
\begin{array}{l}
  \{ 0,1,2,3,4,7,8,10,11,12,13,16,18,20,24,26,29,31,32,33,36,38,43,46,47,50,53 \} \\
  \{ 0,1,2,3,7,8,10,11,12,13,16,17,21,22,24,27,30,34,41,42,43,45,47,49 \}
\end{array}
$$

$$
\begin{array}{l}
 \{ 0,1,2,3,5,6,7,9,11,12,14,15,17,19,23,24,25,26,29,32,33,39,40,43,45,48,52 \} \\
 \{ 0,1,2,3,4,5,9,11,14,15,16,18,22,26,27,31,32,34,37,39,41,42,45,51 \}
\end{array}
$$

$$
\begin{array}{l}
 \{ 0,1,2,3,5,8,9,11,12,13,14,18,19,21,24,25,27,29,32,34,35,39,41,43,44,48,49 \} \\
 \{ 0,2,3,4,6,8,10,13,16,17,19,20,21,25,28,29,32,33,34,39,40,41,43,46 \}
\end{array}
$$

$$
\begin{array}{l}
 \{ 0,2,3,4,6,7,8,10,11,14,16,17,18,20,23,25,26,28,31,32,36,37,38,41,42,47,49 \} \\
 \{ 0,1,2,3,5,8,9,10,12,16,17,18,22,25,28,30,35,37,41,44,45,46,48,49 \}
\end{array}
$$

All four examples are in the canonical form defined in \cite{Djokovic:AnnComb:2011} and since they are different, this implies that they are non-equivalent.

\section{Acknowledgements}
The authors thank Joe Sawada and Daniel Recoskie for sharing improved
versions of the their C code for computing ordinary and charmed bracelets.
The authors wish to acknowledge generous support by NSERC.
This work was made possible by the facilities of the Shared Hierarchical
Academic Research Computing Network (SHARCNET) and Compute/Calcul Canada.


\begin{thebibliography}{99}

\bibitem{Arasu:Xiang:DCC:1992}
K. T. Arasu and Q. Xiang,
On the existence of periodic complementary binary sequences
Des. Codes Cryptogr. 2 (1992), 257-–262.

\bibitem{Baumert:1971} L. D. Baumert, Cyclic difference sets.
Lecture Notes in Mathematics, Vol. 182 Springer-Verlag, Berlin-New York 1971.

\bibitem{Cohn:1992} J. H. E. Cohn, A D-optimal design of order $102$.
Discrete Math. 102 (1992), 61-–65,

\bibitem{Djokovic:DesCodes:1998} D. {\v{Z}}. {\Dbar}okovi{\'c},
Note on periodic complementary sets of binary sequences, 
Des. Codes Cryptogr. 13 (1998), 251-–256.

\bibitem{Djokovic:AnnComb:2011} D. {\v{Z}}. {\Dbar}okovi{\'c},
Cyclic $(v;r,s;\lambda)$ difference families with two base blocks and
 $v\leq50$. Ann. Comb.  15 (2011), 233--254.

\bibitem{DK:JCD:2012} D. {\v{Z}}. {\Dbar}okovi{\'c}, I. S. Kotsireas, 
New results on D-optimal Matrices. 
J. Combin. Designs, 20 (2012), 278--289.

\bibitem{FGS:2001}
R. J. Fletcher, M. Gysin and J. Seberry,
Application of the discrete Fourier transform to the search for
generalised Legendre pairs and Hadamard matrices.
Australas. J. Combin.  23  (2001), 75--86.

\bibitem{Handbook:DifferenceSets}
D. Jungnickel, A. Pott, K. W. Smith, Difference sets, in Handbook of combinatorial designs, 2nd ed. C. J. Colbourn and J. H. Dinitz (eds) pp. 419--435.  Discrete Mathematics and its Applications (Boca Raton). Chapman \& Hall/CRC, Boca Raton, FL, 2007.

\bibitem{KO:2007}
H. Kharaghani and W. Orrick, D-optimal matrices, in Handbook of Combinatorial Designs, 2nd ed. C. J. Colbourn, J. H. Dinitz (eds)  pp. 296--298.
Discrete Mathematics and its Applications (Boca Raton). 
Chapman \& Hall/CRC, Boca Raton, FL, 2007.


\bibitem{KK:2008}
I. S. Kotsireas,  and C. Koukouvinos, 
Periodic complementary binary sequences of length 50.
Int. J. Appl. Math. 21 (2008),  509--514.

\bibitem{KKNK:1994}
S. Kounias, C. Koukouvinos, N. Nikolaou and A. Kakos,
The nonequivalent circulant D-optimal designs for $n \equiv 2 \mod  4, n \leq 54, n=66$. 
J. Combin. Theory Ser. A  65 (1994), 26--38.
		

\bibitem{MDV:JCD:2004}
L. Mart{\'{\i}}nez, D. {\v{Z}}. {\Dbar}okovi{\'c}, 
A. Vera-L{\'o}pez,
Existence question for difference families and construction of
some new families. 
J. Combin. Designs, 12 (2004), 256--270.

\bibitem{MR:HCD2007}
R. Mathon and A. Rosa,
2-$(v,k,\lambda)$ Designs of Small order, in Handbook of Combinatorial Designs,
2nd ed. C. J. Colbourn, J. H. Dinitz (eds)  pp.. 25-58.
Discrete Mathematics and its Applications (Boca Raton). 
Chapman \& Hall/CRC, Boca Raton, FL, 2007.



\bibitem{Ricker:book:2003}
D. W. Ricker, Echo Signal Processing.
Series: The Springer International Series in Engineering and Computer Science, Vol. 725 Springer, 2003.

\bibitem{Sawada:2001} J. Sawada, Generating bracelets in constant amortized time. 
SIAM J. Comput. 31  (2001), 259--268.

\bibitem{Sawada:2003} J. Sawada, 
A fast algorithm to generate necklaces with fixed content. 
Theoret. Comput. Sci. 301 (2003), no. 1-3, 477--489.



\bibitem{Stinson:book:2004}
D. R. Stinson, Combinatorial designs. Constructions and analysis. Springer-Verlag, New York, 2004.

\bibitem{Yang:1971}
Yang, C. H.
On Hadamard matrices constructible by circulant submatrices.
Math. Comp. 25 (1971), 181--186.



\end{thebibliography}
\end{document}